\definecolor{Red}{cmyk}{0,1,1,0}
\definecolor{verde}{cmyk}{1,0,1,0}
\definecolor{azul}{cmyk}{1,1,0,0}
\numberwithin{equation}{section}
\newcommand{\E}{\mathbb{E}}
\newcommand{\prob}{\mathbb{P}_{\textbf{p}}}
\newcommand{\be}{\begin{equation}}
\newcommand{\ee}{\end{equation}}
\newtheorem{teorema}{Theorem}
\newtheorem{proposicao}{Proposition}
\newtheorem{proposition}[teorema]{Proposition}
\newtheorem{definicao}{Definition}
\newtheorem{lema}{Lemma}
\begin{document}
\title{Truncated Connectivities in a highly supercritical anisotropic percolation model }
\author{Rodrigo G. Couto \footnote{ Departamento de
Matem{\'a}tica, Universidade Federal de Minas Gerais, Av.
Ant\^onio Carlos 6627 C.P. 702 CEP30123-970 Belo Horizonte-MG,
Brazil and Departamento de Matem\'atica, Universidade Federal de Ouro Preto, Rua Diogo de Vasconcelos 122 CEP35400-000 Ouro Preto-MG} ,
Bernardo N. B. de Lima \footnote{ Departamento de Matem{\'a}tica, Universidade Federal de Minas Gerais, Av. Ant\^onio
Carlos 6627 C.P. 702 CEP30123-970 Belo Horizonte-MG, Brazil} ,
R\'emy Sanchis$\;^{\dag}$}
\maketitle
\begin{abstract}
We consider an anisotropic bond percolation model on $\mathbb{Z}^2$,
with $\textbf{p}=(p_h,p_v)\in [0,1]^2$, $p_v>p_h$, and declare each
horizontal (respectively vertical) edge of $\mathbb{Z}^2$ to be open
with probability $p_h$(respectively $p_v$), and otherwise closed,
independently of all other edges. Let $x=(x_1,x_2) \in \mathbb{Z}^2$
with $0<x_1<x_2$, and $x'=(x_2,x_1)\in \mathbb{Z}^2$. It is natural
to ask how the two point connectivity function
$\prob(\{0\leftrightarrow x\})$ behaves, and whether anisotropy in
percolation probabilities implies the strict inequality
$\prob(\{0\leftrightarrow x\})>\prob(\{0\leftrightarrow x'\})$. In
this note we give an affirmative answer in the highly supercritical
regime.
\end{abstract}
{\footnotesize Keywords: percolation; anisotropy; truncated connectivity; supercritical phase \\
MSC numbers:  60K35, 82B41, 82B43}

\section{Introduction and Main Result}

Consider the square lattice $\mathbb{L}^2=(\mathbb{V},\mathbb{E})$, the graph where the set of vertices is $\mathbb{V}=\mathbb{Z}^2$ and the set of edges is  formed by pairs of nearest neighbors, i.e., $\mathbb{E}=\{e=\langle x,y\rangle\subset \mathbb{V}:d(x,y)=1\}$ where $d(x,y)$ is the usual path distance  in $\mathbb{Z}^2$. The set $\mathbb{E}$ is naturally partitioned in two disjoint sets, the set of horizontal edges, $\mathbb{E}^h=\{e=\langle x,y\rangle\in \mathbb{E}:x_2=y_2\}$ and $\mathbb{E}^v=\{e=\langle x,y\rangle\in \mathbb{E}:x_1=y_1\}$, the set of vertical edges.

Throughout this work, whenever $X$ is a set, we will denote its
cardinality by $|X|$. If $X \subset \mathbb{E}$ we will use
$X^h=X\cap \mathbb{E}^h$ to denote the set of horizontal edges of
$X$ and $X^v=X\cap \mathbb{E}^v$ to denote the set of vertical edges
of $X$.

We will now define the anisotropic bond percolation model. We define
a configuration $\omega$ of the process as a function
$\omega:\mathbb{E}\rightarrow \{0,1\}$. We call $\Omega$ the set of
configurations in $\mathbb{L}^2$. Given $\omega \in \Omega$ we say
that the edge $e$ is open if $\omega(e)=1$ and closed if
$\omega(e)=0$. Given $p_h, p_v \in [0,1]$ we state that each
horizontal edge $e \in \mathbb{E}^h$ is open with probability $p_h$
and closed with probability $1-p_h$ independently, and each vertical
edge $e \in \mathbb{E}^v$ is open with probability $p_v$ and closed
with probability $1-p_v$ independently. Denote by $\mu(e)$ the
Bernoulli measure with parameter $p_h$ if $e \in \mathbb{E}^h$ or
$p_v$ if $e \in \mathbb{E}^v$. To simplify the notation we use
$\textbf{p}=(p_h,p_v)$ to represent the parameters $p_h$ and $p_v$.
We assume throughout the paper that $p_h\leq p_v$.

Thus, the anisotropic bond percolation model on the square lattice is described by the probability space
$(\Omega, \mathcal{F},\prob)$ where $\Omega=\{0,1\}^{\E}$, $\mathcal{F}$ is the $\sigma$-algebra generated
by the cylinder sets in $\Omega$ and $\prob=\prod_{e\in\E}\mu(e)$ is the product of Bernoulli
measures $\mu(e)$.

Given a configuration $\omega \in \Omega$, an open cluster $A$ in $\omega$ is a connected subgraph
$A=(V_A,E_A)$ of $\mathbb{L}^2$ such that $\omega(e)=1,\forall$ $e \in E_A$ and $\omega(e)=0,\forall$ $e \in \partial_eA$, where $\partial_eA=\{e \in \mathbb{E}:|e \cap V_A|=1\}$ is the edge boundary
of $A$.
Given $x,y \in \mathbb{Z}^2$ denote by $\{x\leftrightarrow y\}$ the event where $x,y$ belongs the same
open cluster. Denote by $\tau_{\textbf{p}}(x,y)=\prob(\{x\leftrightarrow y\})$ the two-point
connectivity function.

In the isotropic case, that is $p_h=p_v$, given any pair of vertices
$x=(x_1,x_2), x'=(x_2,x_1) \in \mathbb{Z}^2_+$, with $x_1<x_2$, it
holds that  $\tau_{\textbf{p}}(0,x)=\tau_{\textbf{p}}(0,x')$, where
$0$ is the origin of $\mathbb{Z}^2$. Concerning the anisotropic
case, $p_h\neq p_v$, E. Andjel raised the question whether the
strict inequality $\tau_{\textbf{p}}(0,x)>\tau_{\textbf{p}}(0,x')$
holds or not. In spite of its apparent simplicity, this conjecture
(without restrictions) remains open. A more detailed discussion
about this conjecture and an affirmative answer when the parameters
$p_h$ and $p_v$ are sufficiently small can be found in \cite{LP}.

The result we establish is similar to the result of \cite{LP}, but
concerning the two-point truncated connectivity in the supercritical
regime. Given $x,y \in \mathbb{Z}^2$, we define
$\tau^f_{\textbf{p}}(x,y)$, the truncated connectivity function, as the probability that $x$ and $y$
belong
 the same finite open cluster, i.e.,
$$\tau^f_{\textbf{p}}(x,y)=\prob(\exists \mbox{ open cluster }A: \{x,y\} \subset V_A, |E_A|<\infty)$$

Given $p_h$ and $p_v$, define the constants $\lambda_h=\frac{1-p_h}{p_h}$, $\lambda_v=\frac{1-p_v}{p_v}$ and $\eta=\frac{\lambda_v}{\lambda_h}\in (0,1]$. Observe that for a fixed $p_h$, we have
$$\displaystyle\lim_{p_v \rightarrow {p_h}^+}\eta=1 \quad\mbox{ and }\quad\displaystyle\lim_{p_v \rightarrow 1}\eta=0.$$

Now we can state the main result of this paper:
\begin{teorema}\label{resultado2}

Given any $\rho >1$ and $\eta\in (0,1)$, there exists
$0<p^{\ast}(\eta,\rho)<1$ (close enough to $1$) such that
\begin{equation}\label{princ}\tau^{f}_{\textbf{p}}(0,x)>\tau^{f}_{\textbf{p}}(0,x')\end{equation}
holds for all $p_h>p^{\ast}(\eta,\rho)$ and
$x\in\mathbb{L}_\rho=:\{(y_1,y_2)\in\mathbb{Z}^2_+; y_2=\rho y_1\}$.
\end{teorema}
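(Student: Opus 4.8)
The plan is to reduce the asymmetric statement to a comparison of two models on a \emph{single} event, and then to run an energy--entropy (Peierls) estimate in which the combinatorial entropy cancels for symmetry reasons. The reflection $\sigma\colon(a,b)\mapsto(b,a)$ of $\mathbb Z^2$ fixes the origin, sends $x$ to $x'$, interchanges $\mathbb{E}^h$ with $\mathbb{E}^v$, and pushes the product measure with parameters $(p_h,p_v)$ forward to the one with parameters $(p_v,p_h)$. As it maps finite clusters to finite clusters, it gives $\tau^f_{(p_h,p_v)}(0,x)=\tau^f_{(p_v,p_h)}(0,x')$, so Theorem~\ref{resultado2} is equivalent to
\[
\tau^f_{(p_v,p_h)}(0,x')>\tau^f_{(p_h,p_v)}(0,x'),
\]
that is, to the assertion that enclosing the \emph{wide} endpoint $x'$ in a finite cluster is more likely when the \emph{vertical} edges are the weak ones. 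This is intuitive: a cluster reaching far to the right must be closed off by long top and bottom walls, and these walls are made of vertical edges, which are cheap to close precisely when $p_v$ is small.

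Both sides now expand over the \emph{same} family of finite clusters: for parameters $(q_h,q_v)$,
\[
\tau^f_{(q_h,q_v)}(0,x')=\sum_{A\ni 0,x'}(1-q_h)^{|(\partial_e A)^h|}(1-q_v)^{|(\partial_e A)^v|}\,\theta_{(q_h,q_v)}(A),
\]
where $\theta_{(q_h,q_v)}(A)$ is the probability that $A$ is connected through its interior edges. Since the index set is identical, the number of clusters of each shape drops out and the difference is a single sum over $A\ni 0,x'$; the weight ratio between the two models is
\[
\Big(\tfrac{1-p_h}{1-p_v}\Big)^{|(\partial_e A)^v|-|(\partial_e A)^h|}\cdot\frac{\theta_{(p_v,p_h)}(A)}{\theta_{(p_h,p_v)}(A)} .
\]
For the clusters that dominate as $p_h\uparrow 1$, namely those of least boundary reaching $x'$ (monotone staircases and the thin polyominoes they bound, for which $|(\partial_e A)^v|-|(\partial_e A)^h|=2(x_2-x_1)$), the boundary factor is $\big(\tfrac{1-p_h}{1-p_v}\big)^{2(x_2-x_1)}\to\eta^{-2(x_2-x_1)}>1$, while the ratio of $\theta$'s is also $\geq 1$ for these clusters (for a staircase it equals $(p_v/p_h)^{x_2-x_1}$, as the wide cluster is connected more easily when its long direction carries the strong edges). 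Since $x\in\mathbb L_\rho$ forces $x_2-x_1=(\rho-1)x_1$, this is an advantage growing exponentially in $x_1$, and I would record it as a clean lower bound by restricting to the $\binom{x_1+x_2}{x_1}$ staircases, where $\theta$ is known exactly.

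The main obstacle is to show that this advantage survives the clusters that point the other way, i.e. the $A\ni 0,x'$ with more horizontal than vertical boundary (for which the ratio above is $<1$), together with the residual effect of the non-product factors $\theta$. The mechanism I would exploit is that such a cluster, while forced by $x'$ to span $x_2$ columns, can acquire an excess of horizontal boundary only by carrying at least $2(x_2-x_1)$ boundary edges \emph{beyond the minimum}; each excess edge contributes a factor tending to $0$ as $p_h\uparrow 1$. Bounding the number of clusters with a prescribed excess by the standard contour estimate $C^{k}$ and summing the resulting geometric series, the total weight of the unfavourable clusters is at most $C^{O(x_1+x_2)}(1-p_v)^{2(x_2-x_1)}$ times the dominant weight, whereas the staircase clusters already contribute on the order of $\binom{x_1+x_2}{x_1}$ times it. Because the suppression $(1-p_v)^{2(x_2-x_1)}$ eventually beats the fixed-base entropy $C^{O(x_1+x_2)}$, the comparison closes; but it closes only after $p_h$ passes a threshold $p^\ast(\eta,\rho)$ close to $1$, which must grow with the entropy exponent (hence depend on $\rho$) and with $\eta$. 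Carrying out this balance uniformly over all $x\in\mathbb L_\rho$, and in particular controlling the factors $\theta$ so that the cheap excess boundary really dominates, is where essentially all the work lies.
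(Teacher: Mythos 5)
Your reflection identity $\tau^f_{(p_h,p_v)}(0,x)=\tau^f_{(p_v,p_h)}(0,x')$ is correct and is a clean reformulation, and the energy--entropy balance you describe is the same one that drives the paper's argument. There, however, one never decomposes over cluster shapes: the lower bound on $\tau^f(0,x)$ comes from summing over the $\beta_x$ \emph{minimal contours} around $\{0,x\}$ (contour closed, the two inner staircases open), and the upper bound on $\tau^f(0,x')$ is a Peierls sum over \emph{all} contours around $\{0,x'\}$, split at excess $\|x\|/2$. The entropy cancellation you are after is obtained there simply from $|\Gamma^n_{\{0,x\}}|=|\Gamma^n_{\{0,x'\}}|$, so the dominant weights differ by exactly $\eta^{2(x_2-x_1)}$ and the condition becomes $\eta<\tilde\eta(p_h,x)$ with $\tilde\eta\to 1$ as $p_h\to 1$, uniformly on $\mathbb{L}_\rho$ via subadditivity of $\beta_x$.

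The genuine gap in your version is the treatment of the interior-connectivity factors $\theta$. A term-by-term comparison over shapes $A\ni 0,x'$ requires classifying \emph{every} $A$ as favourable or unfavourable, and the classification depends not only on $|(\partial_eA)^v|-|(\partial_eA)^h|$ but also on the ratio $\theta_{(p_v,p_h)}(A)/\theta_{(p_h,p_v)}(A)$, which you verify only for staircases. That ratio is not always at least $1$: for a tree-like shape it is $(p_v/p_h)$ raised to the excess of horizontal over vertical interior edges, hence strictly less than $1$ for shapes with vertical-heavy interiors (e.g.\ tall appendages), and for shapes of size comparable to $\lambda_h^{-1}$ or larger the connectivity probabilities are not close to $1$ and cannot simply be absorbed. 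To close your argument you must show that whenever the $\theta$-ratio dips below $1$ the boundary factor over-compensates, i.e.\ you need an inequality relating the interior edge imbalance of $A$ to its boundary imbalance; you neither state nor prove such a lemma, and you yourself flag that ``essentially all the work'' lies here. The paper sidesteps the issue entirely because its decomposition is over the outermost set of closed edges, so no interior connectivity probability ever appears: the lower bound forces only two open minimal paths, and the upper bound discards the interior configuration altogether. I would either supply the missing lemma controlling $\theta_{(p_v,p_h)}(A)/\theta_{(p_h,p_v)}(A)$ in terms of $\partial_eA$, or switch to the contour decomposition, at which point your scheme essentially becomes the paper's proof.
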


Observe that fixed $\eta$ and $p_h$ the Equation \ref{princ} is true
for any $p_v\geq\frac{1}{1+\eta\lambda_h}$. Fixed the parameters
$\textbf{p}=(p_h,p_v)$ the Equation \ref{princ} holds uniformly for
any $x\in\mathbb{L}_\rho$; observe that if $\rho\notin\mathbb{Q}$
then $\mathbb{L}_\rho=\emptyset$, otherwise $\mathbb{L}_\rho$ has
infinitely many points.

The paper is organized as follows. In Section 2, we give the necessary definitions for the rest of the paper. In Section 3, we give some lemmas about counting with contours. We will estimate the bounds for finite connectivity in Section 4. Finally, in Section 5, we complete the proof of Theorem \ref{resultado2}.

\section{Notations and Definitions}

In this section we will give a representation of the two-point truncated connectivity. Initially we need some notations and definitions, we follow \cite{ASGR} and \cite{APS}.

Given $V \subset \mathbb{Z}^2$ denote by $\partial_eV=\{e \in \mathbb{E}:|e\cap V|=1\}$ the edge boundary
of $V$. Denote by $\partial_v^{ext}V=\{x \in \mathbb{Z}^2\backslash V:d(x,V)=1\}$ the vertex external
boundary of $V$ and $\partial_v^{int}V=\{x \in V:d(x,\mathbb{Z}^2\backslash V)=1\}$ the vertex internal
boundary of $V$.

We say that a set $\gamma \subset \mathbb{E}$ is a cut set if the graph $\mathbb{L}^2\backslash \gamma \equiv (\mathbb{Z}^2,\mathbb{E} \backslash \gamma)$ is disconnected.

\begin{definicao}
A finite set $\gamma \subset \mathbb{E}$ is called a contour if
$\mathbb{L}^2\backslash \gamma$ has exactly one finite connected
component and $\gamma$ is minimal with respect to this property,
i.e. for all edges $e \in \gamma$ the graph
$(\mathbb{V},\mathbb{E}\backslash(\gamma \backslash e))$ has no
finite connected component. We denote by $\Gamma$ the set of all
contours in $\mathbb{L}^2$.
\end{definicao}

If $\gamma$ is a contour in $\mathbb{L}^2$, we denote by
$G_{\gamma}=(I_{\gamma},E_{\gamma})$ the unique finite connected
component of $\mathbb{L}^2\backslash \gamma$, where
$I_{\gamma}\subset \mathbb{Z}^2$ is the vertex interior of the
contour $\gamma$ and $E_{\gamma}\subset \mathbb{E}$ is the edge
interior of the contour $\gamma$. Observe that
$\partial_eI_{\gamma}=\gamma$.

Given a contour $\gamma \in \mathbb{L}^2$ and a set of vertices
$X \subset \mathbb{Z}^2$, we say that $\gamma$ surrounds $X$ if $X
\subset I_{\gamma}$. We denote by $\Gamma_X$ the set of contours that surround $X$ and by
$\Gamma_X^n\subset \Gamma_X$ the set of such contours
with cardinality $n$, i.e., $\Gamma_X^n=\{\gamma
\in \Gamma_X:|\gamma|=n\}$.

In fact, we will work in a finite subvolume of $\mathbb{L}^2$, consider
$G=(V_{G},E_{G})$ a finite subgraph of $\mathbb{L}^2$ and denote by
$\Omega_{G}$ the set of configurations in $G$, i.e.
$\Omega_G=\{\omega:E_G\rightarrow \{0,1\}\}$. Given $\omega \in
\Omega_G$ denote by $O(\omega)=\{e \in E_G: \omega(e)=1\}$ the set
of open edges in $\omega$ and $C(\omega)=\{e \in E_G: \omega(e)=0\}$
the set of closed edges in $\omega$.

Considering the measure $\prob$ restricted to $\Omega_G$ and fixed $\omega \in \Omega_G$, the probability
$\prob(\omega)$ is given explicitly by
\begin{eqnarray}
\prob(\omega)=p_h^{|O^h(\omega)|}p_v^{|O^v(\omega)|}(1-p_h)^{|C^h(\omega)|}(1-p_v)^{|C^v(\omega)|}
\end{eqnarray}

Given $V \subset \mathbb{Z}^2$, denote by
$\mathbb{L}^2[V]=(V,\mathbb{E}[V])$ the induced subgraph where
$\mathbb{E}[V]=\{\langle x,y\rangle \in \mathbb{E}:x \in V, y \in V\}$. We say
that a set $V \subset \mathbb{Z}^2$ is connected if
$\mathbb{L}^2[V]$ is a connected  graph. Denote by
$G_{N}=\mathbb{L}[V_N]$ the subgraph induced by
$V_N=[-N,N]\times[-N,N]\subset \mathbb{Z}^2$ and denote
$E_N=\mathbb{E}[V_N]$ to mean the edges in $G_N$.

Given any $x,y\in\mathbb{Z}^2$, we can choose some integer $N$ large
enough such that $\{x,y\}\subset V_N\backslash
\partial_v^{int}V_N$. The finite-volume finite connectivity
$\tau^{f,N}_{\textbf{p}}(x,y)$ is defined as the probability that
the vertices $x$ and $y$ belong the same open cluster and this
cluster does not intersect $\partial_v^{int}V_N$, that is
\begin{eqnarray}
\tau^{f,N}_{\textbf{p}}(x,y)=\prob(\exists \mbox{ open cluster }A:
\{x,y\} \subset V_A, V_A \cap
\partial_v^{int}V_N= \emptyset).
\end{eqnarray}

By continuity of the probability, we have that
\begin{eqnarray}
\tau^f_{\textbf{p}}(x,y)=\displaystyle\lim_{N \rightarrow \infty}\tau^{f,N}_{\textbf{p}}(x,y).
\end{eqnarray}
Once we obtain an upper bound for
$\tau^{f,N}_{\textbf{p}}(x,y)$ uniformly in $N$, the same bound also
holds for the limit $\tau^f_{\textbf{p}}(x,y)$.

Observing that

\begin{eqnarray}
&&1=\displaystyle\sum_{\omega \in \Omega_G}p_h^{|O^h(\omega)|}p_v^{|O^v(\omega)|}(1-p_h)^{|C^h(\omega)|}(1-p_v)^{|C^v(\omega)|}=
\nonumber \\
&&=\displaystyle\sum_{\omega \in \Omega_G}p_h^{|O^h(\omega)|+|C^h(\omega)|}p_v^{|O^v(\omega)|+|C^v(\omega)|}(\frac{1-p_h}{p_h})^{|C^h(\omega)|}(\frac{1-p_v}{p_v})^{|C^v(\omega)|}\nonumber \\
&&=p_h^{|E^{h}_G|}p_v^{|E^{v}_G|}\displaystyle\sum_{\omega \in \Omega_G}\lambda_h^{|C^h(\omega)|}\lambda_v^{|C^v(\omega)|}
\end{eqnarray}
we define the partition function $Z_{E_G}(\textbf{p})$ as
\begin{eqnarray}\label{f part}
&&Z_{E_G}(\textbf{p})=Z_{E_G}(p_h,p_v)=p_h^{-|E^{h}_G|}p_v^{-|E^{v}_G|}=\displaystyle\sum_{\omega
\in \Omega_G}\lambda_h^{|C^h(\omega)|}\lambda_v^{|C^v(\omega)|}.
\end{eqnarray}
When $G=G_N$, we write only $Z_N(\textbf{p})$. Analogously, the finite-volume
finite connectivity $\tau^{f,N}_{\textbf{p}}(x,y)$ can be written as

\begin{eqnarray}\label{exp con fin}
&&\tau^{f,N}_{\textbf{p}}(x,y)=\displaystyle\sum_{\substack{\omega \in \Omega_N: \exists \mbox{ open cluster }
A \\ \{x,y\}\subset V_A, \partial_eA \subset E_N}}p_h^{|O^h(\omega)|}p_v^{|O^v(\omega)|}(1-p_h)^{|C^h(\omega)|}(1-p_v)^{|C^v(\omega)|}= \nonumber \\
&&=\frac{1}{Z_N(\textbf{p})}\displaystyle\sum_{\substack{\omega \in \Omega_N: \exists \mbox{ open cluster }
A \\ \{x,y\}\subset V_A, \partial_eA \subset E_N}}\lambda_h^{|C^h(\omega)|}\lambda_v^{|C^v(\omega)|}
\end{eqnarray}
So, a configuration $\omega \in \Omega_N$ is given once we specify
the set of closed edges, $C(\omega)$, in $E_N$. Therefore, we can
rewrite (\ref{f part}) and (\ref{exp con fin}) as
\begin{eqnarray}
&&Z_N(\textbf{p})=\displaystyle\sum_{C \subset E_N}\lambda_h^{|C^h|}\lambda_v^{|C^v|}=p_h^{-|E^{h}_N|}p_v^{-|E^{v}_N|}
\end{eqnarray}
and
\begin{eqnarray}\label{carac}
&&\tau^{f,N}_{\textbf{p}}(x,y)=\frac{1}{Z_N(\textbf{p})}\displaystyle\sum_{\substack{C
\subset E_N \\ x\leftrightarrow y \mbox{ in }E_N\backslash C \\
\exists\gamma \in \Gamma_{\{x,y\}}, \gamma\subset C }}
\lambda_h^{|C^h|}\lambda_v^{|C^v|}
\end{eqnarray}

We will use the representation of the finite-volume finite connectivity given above to find
a lower bound and an upper bound to the finite connectivity.

\section{Contours}

We will give a characterization of the contours in terms of ${\mathbb{L}^2}^{\ast}=(\mathbb{V}^{\ast},\mathbb{E}^{\ast})$, the dual lattice of $\mathbb{L}^2$. The graph ${\mathbb{L}^2}^{\ast}$ is obtained translating $\mathbb{L}^2$ by the vector $\left( \frac{1}{2},\frac{1}{2} \right)$ (see \cite{G} for the formal definition). We have that ${\mathbb{L}^2}^{\ast}$
is isomorphic to $\mathbb{L}^2$ and we denote $0^{\ast}=(1/2,1/2)$ for the origin in ${\mathbb{L}^2}^{\ast}$. Observe that there is a bijection between the edges
of $\mathbb{L}^2$ and the edges of ${\mathbb{L}^2}^{\ast}$, since each edge $e$ of $\mathbb{L}^2$ is crossed by an unique edge
of the dual lattice, which we denote by $e^{\ast}$.

If $X \subset \mathbb{E}$, we let $X^{\ast}=\{e^{\ast}\in \mathbb{E}^{\ast}:e \in X\}$. Then we have the next proposition, whose proof can be found in \cite{ASGR}.

\begin{proposicao}
If $\gamma \in \Gamma$ then $\gamma^{\ast}$ is a circuit in ${\mathbb{L}^2}^{\ast}$.
\end{proposicao}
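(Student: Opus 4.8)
The plan is to show that $\gamma^{\ast}$ is a single simple cycle by combining a local parity count with the minimality built into the definition of a contour. Since $\gamma$ is a contour, its interior $I_\gamma$ is the nonempty, finite, connected vertex set of the unique finite component $G_\gamma$ of $\mathbb{L}^2\setminus\gamma$, and $\partial_e I_\gamma=\gamma$. Taking as working definition that a circuit in ${\mathbb{L}^2}^{\ast}$ is a connected $2$-regular subgraph, I must establish two facts: first, that every vertex of ${\mathbb{L}^2}^{\ast}$ has even degree in $\gamma^{\ast}$; and second, that $\gamma^{\ast}$ reduces to exactly one such cycle.

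For the parity I would fix a dual vertex $v^{\ast}$, which sits at the center of a unit face of $\mathbb{L}^2$ whose four corners are $x_1,x_2,x_3,x_4$ listed cyclically. The four dual edges incident to $v^{\ast}$ are precisely the duals of the primal edges $\langle x_1,x_2\rangle,\langle x_2,x_3\rangle,\langle x_3,x_4\rangle,\langle x_4,x_1\rangle$, and such a dual edge belongs to $\gamma^{\ast}$ exactly when its primal edge lies in $\partial_e I_\gamma$, i.e. when exactly one of its two endpoints lies in $I_\gamma$. Hence the degree of $v^{\ast}$ in $\gamma^{\ast}$ equals the number of indices $i$ (read cyclically) at which the membership of $x_i$ in $I_\gamma$ changes; since reading $x_1,x_2,x_3,x_4$ around the face returns to $x_1$, the number of such changes is necessarily even, so this degree is $0$, $2$ or $4$. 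Thus $\gamma^{\ast}$ is a nonempty even subgraph and therefore contains at least one circuit.

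The crux — and the step I expect to be the main obstacle — is upgrading this to a single simple cycle, which is where the minimality in the definition of a contour enters. Let $C\subseteq\gamma^{\ast}$ be an inclusion-minimal circuit, so that $C$ is a simple cycle. If $C=\gamma^{\ast}$ we are finished, so suppose $C\subsetneq\gamma^{\ast}$, and let $D\subset\gamma$ be the set of primal edges dual to $C$. By the Jordan curve theorem $C$ encloses a nonempty finite set $S$ of primal vertices, and $D$ is exactly its edge boundary $\partial_e S$; hence removing $D$ alone already isolates $S$ as a finite component of $(\mathbb{V},\mathbb{E}\setminus D)$. Choosing any edge $e\in\gamma\setminus D$ — which is nonempty precisely because $C$ is proper — we have $\gamma\setminus\{e\}\supseteq D$, so $(\mathbb{V},\mathbb{E}\setminus(\gamma\setminus e))$ still contains $S$ as a finite connected component. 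This contradicts the requirement that removing any single edge of $\gamma$ leave no finite component, so in fact $C=\gamma^{\ast}$ and $\gamma^{\ast}$ is a circuit. The points needing the most care are the Jordan-curve step guaranteeing that $C$ encloses at least one primal vertex (so that $S\neq\emptyset$) and the identification $D=\partial_e S$, which together make the minimality argument bite.
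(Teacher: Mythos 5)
Your argument is correct, and it is worth noting that the paper itself offers no proof of this proposition at all --- it simply refers the reader to \cite{ASGR} --- so your self-contained derivation is a genuine addition rather than a variant of an in-text argument. Your two steps are both sound: the face-by-face parity count (using the paper's observation $\partial_e I_\gamma=\gamma$, which itself follows from the minimality clause) correctly shows every dual vertex has degree $0$, $2$ or $4$ in $\gamma^{\ast}$, hence $\gamma^{\ast}$ contains a simple cycle $C$; and the minimality clause in the definition of a contour is exactly what forbids $C\subsetneq\gamma^{\ast}$, since the primal edges $D$ dual to $C$ already cut off the nonempty finite set $S$ of enclosed primal vertices, so any $e\in\gamma\setminus D$ would violate the requirement that $(\mathbb{V},\mathbb{E}\setminus(\gamma\setminus e))$ have no finite component. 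The two points you flag as delicate do hold: a simple dual cycle encloses at least one dual face and hence at least one primal vertex, and the standard planar duality gives $D=\partial_e S$. One small simplification you could make: you do not actually need $S$ itself to be a connected component (though it is, since the interior of a simple rectilinear cycle is an edge-connected polyomino); it suffices that no edge of $\mathbb{E}\setminus(\gamma\setminus e)$ joins $S$ to its complement, so every component meeting $S$ is contained in $S$ and is therefore finite, which already contradicts minimality.
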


The next step is to estimate the number of contours surrounding $\{0,x\}$. Consider $x=(x_1,x_2)$ with $x_1,x_2>0$, given a contour $\gamma \in \Gamma_{\{0,x\}}$, we say that $\gamma$ is a minimal contour if
\begin{eqnarray}\label{tam cont}
|\gamma^h|=2(x_2+1)\quad and\quad |\gamma^v|=2(x_1+1)
\end{eqnarray}

We use the notation $\|x\|$ for the number of bonds of a minimal
contour that surrounds $\{0,x\}$, i.e., $\|x\|=2(x_1+x_2+2)$. Let
$\beta_x=|\Gamma_{\{0,x\}}^{\|x\|}|$  be the number of minimal
contours surrounding $\{0,x\}$.

We stress that the combinatorial estimative below is not optimal but
it is enough to proof Theorem \ref{resultado2}.

\begin{lema}\label{contagem}
 $|\Gamma_{\{0,x\}}^{\|x\|+m}|\leq 12^m \binom{\|x\|}{m} \beta_x$, $\mbox{ if }$ $0\leq m \leq \frac{\|x\|}{2}$
\end{lema}

\begin{proof}

Given $\gamma \in \Gamma^{\|x\|+m}_{\{0,x\}}$, the circuit $\gamma^{\ast}$ crosses the $x$-axis. Let then denote
by $e^{\ast}_{\gamma}$ the leftmost edge of $\gamma^{\ast}$ crossing the $x$-axis, that is, denote by $e_k^{\ast}=\langle v^{k}_{-},v^{k}_{+}\rangle$ the edge with $v^{k}_{-}=\left(k-\frac{1}{2},-\frac{1}{2}\right)$ and $v^{k}_{+}=\left(k-\frac{1}{2},\frac{1}{2}\right)$, by $k_{\gamma}=\min\{k\in\mathbb{Z}; \langle v^{k}_{-},v^{k}_{+}\rangle\in \gamma^{\ast}\}$ and by
$e^{\ast}_{\gamma}=\langle v^{k_{\gamma}}_{-},v^{k_{\gamma}}_{+}\rangle$ the desired edge. Since the circuit $\gamma^{\ast}$ surrounds $\{0,x\}$, we have $-m<k_{\gamma}\leq 0$.

Denote by $\Gamma^{\|x\|+m}_{\{0,x\}}(e_k^{\ast})=\{\gamma \in\Gamma^{\|x\|+m}_{\{0,x\}}:e^{\ast}_{\gamma}=
e_k^{\ast}\}$ the set of contours of cardinality $\|x\|+m$ surrounding $\{0,x\}$ whose leftmost edge
of the dual circuit $\gamma^{\ast}$ is $e_{k}^{\ast}$. Then we can write $\Gamma^{\|x\|+m}_{\{0,x\}}$ as the disjoint union
$$\Gamma^{\|x\|+m}_{\{0,x\}}=\displaystyle\bigcup_{k=-m+1}^{0}\Gamma^{\|x\|+m}_{\{0,x\}}(e_k^{\ast})$$

Observe that each contour $\gamma \in\Gamma^{\|x\|+m}_{\{0,x\}}(e_k^{\ast})$ can be constructed  from a minimal contour surrounding $\{0,x\}$ adding $m$ extra edges, each one has 3 possibilities to be chosen , as there are $\binom{\|x\|+m}{m}$ choices  for the positions of the added edges, it holds the following upper bound (with a lot of over-counting):

\begin{eqnarray}\label{assertion}
|\Gamma^{\|x\|+m}_{\{0,x\}}(e_k^{\ast})|\leq 3^m \cdot \binom{\|x\|+m}{m}\beta_{x} \quad\mbox{for}\quad k=-m+1,...,-1,0
\end{eqnarray}

A formal proof for the inequality above is given in the Appendix. With this upper bound, we obtain

\begin{eqnarray}\label{conta F 1}
&&|\Gamma^{\|x\|+m}_{\{0,x\}}|=\displaystyle\sum_{k=-m+1}^{0}|\Gamma^{\|x\|+m}_{\{0,x\}}(e_k^{\ast})|
\leq m \cdot 3^m \cdot \binom{\|x\|+m}{m}\beta_{x}=m \cdot 3^m
\binom{\|x\|}{m}\displaystyle\prod_{j=1}^{m}\frac{\|x\|+j}{\|x\|-m+j}\cdot\beta_{x} \nonumber \\
&& \leq m \cdot 3^m \cdot\binom{\|x\|}{m}\left(\frac{\|x\|+1}{\|x\|-m+1}\right)^m\cdot\beta_{x}\leq  12^m \cdot \binom{\|x\|}{m}\cdot\beta_{x}.
\end{eqnarray}

where in the last inequality we used that $m\leq 2^m$ and $\frac{\|x\|+1}{\|x\|-m+1}\leq 2$ for any positive integer $m\leq\frac{\|x\|}{2}$.
\end{proof}

The next lemma gives an estimate for the number of minimal contours that surround $\{0,x\}$.
\begin{lema}\label{lim beta}
For any positive integers $n$ and $\rho$, let us denote by $\alpha_n=\beta_{(n,\rho n)}$. Then, there exists the limit
$\displaystyle\lim_{n\rightarrow \infty}\alpha_n^{\frac{1}{n}}=\alpha$ and $1\leq \alpha \leq 4^{1+\rho}$.
\end{lema}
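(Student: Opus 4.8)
The plan is to translate the counting of minimal contours into the enumeration of a special class of lattice regions, and then combine a supermultiplicativity argument (for existence of the limit) with a direct path-counting estimate (for the two bounds). First I would pass from a contour $\gamma$ to its interior $I_\gamma$, using $\gamma=\partial_e I_\gamma$, so that $\beta_x$ counts the connected vertex sets $S\supset\{0,x\}$ whose edge boundary $\partial_e S$ has the minimal cardinality $\|x\|=2(x_1+x_2+2)$. Counting the vertical boundary edges column by column and the horizontal ones row by row, each column of the bounding box contributes at least $2$ vertical boundary edges and each row at least $2$ horizontal ones, so $|\partial_e S|\ge 2(x_1+1)+2(x_2+1)$, with equality exactly when every row and every column of $S$ is a single contiguous interval, i.e.\ when $S$ is an HV-convex polyomino whose bounding box is precisely $[0,x_1]\times[0,x_2]$. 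Using that $0=(0,0)$ and $x=(x_1,x_2)$ are opposite corners of that box, a short V-convexity argument (a dip in a boundary profile would leave a column occupied in two non-consecutive rows) shows that both profiles $r\mapsto\ell_r$ (leftmost occupied column in row $r$) and $r\mapsto u_r$ (rightmost) are non-decreasing. Thus minimal contours surrounding $\{0,x\}$ are exactly the directed (staircase) convex polyominoes with this bounding box.

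For the existence of the limit I would prove supermultiplicativity, $\alpha_{m+n}\ge\alpha_m\,\alpha_n$. Given a minimal contour $S_1$ for $(m,\rho m)$ and a minimal contour $S_2$ for $(n,\rho n)$, translate $S_2$ by $(m,\rho m)$ to get $S_2'$ with box $[m,m+n]\times[\rho m,\rho(m+n)]$, which contains the common corner $(m,\rho m)$. The union $S_1\cup S_2'$ is connected (they meet at that corner), has bounding box $[0,m+n]\times[0,\rho(m+n)]$, and is again HV-convex: the two regions overlap only in the single column $c=m$ and the single row $r=\rho m$, where their intervals abut at $(m,\rho m)$ and merge into single intervals. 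Hence $S_1\cup S_2'$ is a minimal contour for $(m+n,\rho(m+n))$. The map $(S_1,S_2)\mapsto S_1\cup S_2'$ is injective, since cutting along the known column $c=m$ and row $r=\rho m$ recovers $S_1$ and $S_2'$. Applying Fekete's lemma to $\log\alpha_n$ then gives $\lim_n\alpha_n^{1/n}=\sup_n\alpha_n^{1/n}=:\alpha$.

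The two bounds come next. The lower bound $\alpha\ge1$ is immediate, since the rectangle $[0,n]\times[0,\rho n]$ is always a minimal contour, whence $\alpha_n\ge1$. For the upper bound I would encode each directed convex polyomino by its two monotone boundary profiles $(\ell_r)_r$ and $(u_r)_r$; each is a monotone up/right lattice path inside the box, with $x_2+1$ vertical edges and at most $x_1$ horizontal edges, hence at most $x_1+x_2+1$ steps. Since such a path is determined by its binary sequence of up/right choices, there are at most $2^{x_1+x_2+1}$ of them, and a minimal contour is determined by the ordered pair (dropping the non-crossing condition only overcounts). Therefore $\alpha_n=\beta_{(n,\rho n)}\le\bigl(2^{x_1+x_2+1}\bigr)^2=4\cdot 4^{(1+\rho)n}$, and taking $n$-th roots as $n\to\infty$ yields $\alpha\le 4^{1+\rho}$.

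I expect the main obstacle to be the geometric characterization in the first paragraph: verifying rigorously that minimality of $|\partial_e S|$ forces $S$ to be HV-convex with the prescribed bounding box, and that the corner constraints upgrade this to the monotone-profile (directed) structure the path encoding relies on. Once that structure is secured, the supermultiplicativity (injectivity of the gluing and preservation of HV-convexity) and the enumeration bound are routine; the only quantitative point worth care is keeping the base exactly $2$ per boundary path, so that the two profiles together contribute $4^{(1+\rho)n}$ and the stated bound $\alpha\le 4^{1+\rho}$ emerges from this deliberately crude estimate.
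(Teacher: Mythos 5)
Your proof is correct and follows the same strategy the paper uses (and merely sketches): supermultiplicativity $\alpha_{m+n}\ge\alpha_m\alpha_n$ via gluing two minimal interiors at the shared corner $(m,\rho m)$, Fekete's lemma for the existence of the limit, and an encoding by two monotone lattice paths for the bounds $1\le\alpha\le 4^{1+\rho}$. The only cosmetic difference is that you describe a minimal contour through the row profiles of its interior polyomino, whereas the paper uses the two monotone staircase arcs of the dual circuit (as in its lower-bound section); these are equivalent descriptions.
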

The proof follows by observing that $\alpha_{n+m}\geq \alpha_n \alpha_m$, and from a standard result on subadditive sequences (see
\cite{G}, page 399).

\section{Upper and Lower bounds on the finite connectivity}

In this section, we will give upper and lower bounds for the finite
connectivity, given by \eqref{carac}, when the parameters
$\lambda_h$ and $\lambda_v$ are sufficiently close to $0$.

\subsection{Upper bound}

Consider the representation of finite-volume finite connectivity
given by \eqref{carac},

Let us recall that $x=(x_1,x_2)$ and $x'=(x_2,x_1)$. Observe that
every set of closed edges $C \subset \mathbb{E}_N$ surrounding
$\{0,x'\}$ contains a contour $\gamma \in \Gamma$ surrounding
$\{0,x'\}$, then we can write

\begin{eqnarray*}
&&\tau_{\textbf{p}}^{f,N}(0,x')\leq
\frac{1}{Z_{N}(\bf{p})}\displaystyle\sum_{\gamma \in
\Gamma_{\{0,x'\}}}\sum_{\substack{C \subset E_{N} \\ C \supset
\gamma}}\lambda_h^{|C^h|}\lambda_v^{|C^v|}
\leq \frac{1}{Z_{N}(\bf{p})}\displaystyle\sum_{\gamma \in \Gamma_{\{0,x'\}}}\left(\lambda_h^{|\gamma^h|}\lambda_v^{|\gamma^v|}\sum_{\substack{C \subset E_{N}\backslash \gamma \\
\exists\delta \in \Gamma_{\{0,x'\}}, \delta\subset (C \cup \gamma )}}\lambda_h^{|C^h|}\lambda_v^{|C^v|}\right)\\
&&=\displaystyle\sum_{\gamma \in
\Gamma_{\{0,x'\}}}\lambda_h^{|\gamma^h|}\lambda_v^{|\gamma^v|}\left(\frac{1}{Z_{N}(\textbf{p})}\sum_{\substack{C
\subset E_{N}\backslash \gamma \\ \exists\delta \in
\Gamma_{\{0,x'\}}, \delta\subset (C \cup \gamma
)}}\lambda_h^{|C^h|}\lambda_v^{|C^v|}\right)\leq
\displaystyle\sum_{\gamma \in \Gamma_{\{0,x'\}}}\lambda_h^{|\gamma^h|}\lambda_v^{|\gamma^v|}\\
&&=\displaystyle\sum_{n \geq \|x\|}\sum_{\gamma \in
\Gamma_{\{0,x'\}}^n}\lambda_h^{|\gamma^h|}\lambda_v^{|\gamma^v|},\nonumber
\end{eqnarray*}
where $\|x\|=2(x_1+x_2+2)$. Using \eqref{tam
cont}, we obtain

\begin{eqnarray*}
&&\tau_{\textbf{p}}^{f,N}(0,x')\leq \lambda_h^{2(x_1+1)}\lambda_v^{2(x_2+1)}\displaystyle\sum_{n \geq \|x\|}\sum_{\gamma \in \Gamma_{\{0,x'\}}^n}\lambda_h^{|\gamma^h|-2(x_1+1)}\lambda_v^{|\gamma^v|-2(x_2+1)}\leq \\
&&\leq \lambda_h^{2(x_1+1)}\lambda_v^{2(x_2+1)}\displaystyle\sum_{n \geq \|x\|}\sum_{\gamma \in \Gamma_{\{0,x'\}}^n}\lambda_h^{|\gamma|-2(|x|+2)}=\\
&&=\lambda_h^{2(x_1+1)}\lambda_v^{2(x_2+1)}\displaystyle\sum_{n \geq
\|x\|}\left( \lambda_h^{n-\|x\|} \sum_{\gamma \in
\Gamma_{\{0,x'\}}^n}1\right),
\end{eqnarray*}
where we used $\lambda_v
\leq \lambda_h$. Then, observing that
$|\Gamma^{n}_{\{0,x\}}|=|\Gamma^{n}_{\{0,x'\}}|$,
\begin{eqnarray}\label{serie}
\tau_{\textbf{p}}^{f,N}(0,x')\leq
\lambda_h^{2(x_1+1)}\lambda_v^{2(x_2+1)}\displaystyle\sum_{n
=\|x\|}^{\frac{3\|x\|}{2}} \lambda_h^{n-\|x\|}|\Gamma^{n}_{\{0,x\}}|
+ \lambda_h^{2(x_1+1)}\lambda_v^{2(x_2+1)}\displaystyle\sum_{n >
\frac{3\|x\|}{2}} \lambda_h^{n-\|x\|}|\Gamma^{n}_{\{0,x\}}|.
\end{eqnarray}

To bound the second term in the right hand side of $\eqref{serie}$, we
use the trivial bound  $|\Gamma^{n}_{\{0,x\}}|\leq 4^n$:
\begin{eqnarray}\label{ngrande}
&&\lambda_h^{2(x_1+1)}\lambda_v^{2(x_2+1)}\displaystyle\sum_{n >
\frac{3\|x\|}{2}}
\lambda_h^{n-\|x\|}|\Gamma^{n}_{\{0,x\}}|\leq
\lambda_h^{2(x_1+1)}\lambda_v^{2(x_2+1)}\displaystyle\sum_{n >
\frac{3\|x\|}{2}} \lambda_h^{n-\|x\|}4^n
< \nonumber \\
&&< \lambda_h^{2(x_1+1)}\lambda_v^{2(x_2+1)}\displaystyle\sum_{j > \frac{\|x\|}{2}} \lambda_h^{j}(4^3)^j.
\end{eqnarray}

If $\lambda_h < 4^{-3}$, then
\begin{eqnarray}
&&\lambda_h^{2(x_1+1)}\lambda_v^{2(x_2+1)}\displaystyle\sum_{n >
\frac{3\|x\|}{2}} \lambda_h^{n-\|x\|}|\Gamma^{n}_{\{0,x\}}| <
\lambda_h^{2(x_1+1)}\lambda_v^{2(x_2+1)}\frac{(4^3
\lambda_h)^{\frac{\|x\|}{2}+1}}{1-4^3\lambda_h}.
\end{eqnarray}

Concerning now the first term in the right hand side of
$\eqref{serie}$, we use Lemma \ref{contagem}, then

$$|\Gamma^{\|x\|+m}_{\{0,x\}}|\leq 12^{m}\beta_x \binom{\|x\|}{m} \mbox{ if }m \leq \frac{\|x\|}{2}.$$

Then
\begin{eqnarray}\label{npequeno}
&&\lambda_h^{2(x_1+1)}\lambda_v^{2(x_2+1)}\displaystyle\sum_{n
=\|x\|}^{\frac{3\|x\|}{2}}
\lambda_h^{n-\|x\|}|\Gamma^{n}_{\{0,x\}}|=
\lambda_h^{2(x_1+1)}\lambda_v^{2(x_2+1)}\displaystyle\sum_{m=0}^{\frac{\|x\|}{2}} \lambda_h^{m}|\Gamma^{\|x\|+m}_{\{0,x\}}|\leq \nonumber \\
&&\leq
\lambda_h^{2(x_1+1)}\lambda_v^{2(x_2+1)}\beta_x\displaystyle\sum_{m=0}^{\frac{\|x\|}{2}}
\lambda_h^{m}12^m \binom{\|x\|}{m}<
\lambda_h^{2(x_1+1)}\lambda_v^{2(x_2+1)}\beta_x\displaystyle\sum_{m=0}^{\|x\|}
(12\lambda_h)^{m}
\binom{\|x\|}{m}= \nonumber \\
&&=\lambda_h^{2(x_1+1)}\lambda_v^{2(x_2+1)}\beta_x(1+12\lambda_h)^{\|x\|}.
\end{eqnarray}

Thus, from $\eqref{ngrande}$ and $\eqref{npequeno}$ we get
\begin{eqnarray}
&&\tau_{\textbf{p}}^{f,N}(0,x')<\lambda_h^{2(x_1+1)}\lambda_v^{2(x_2+1)}\left(\frac{(4^3
\lambda_h)^{\frac{\|x\|}{2}+1}}{1-4^3\lambda_h}
+\beta_x(1+12\lambda_h)^{\|x\|} \right).
\end{eqnarray}
Observing that the bound above does not depend on $N$, we can
take $N\rightarrow\infty$ and get
\begin{eqnarray}\label{cs}
&&\tau_{\textbf{p}}^{f}(0,x')<\lambda_h^{2(x_1+1)}\lambda_v^{2(x_2+1)}\left(\frac{(4^3
\lambda_h)^{\frac{\|x\|}{2}+1}}{1-4^3\lambda_h}
+\beta_x(1+12\lambda_h)^{\|x\|} \right)
\end{eqnarray}

\subsection{Lower bound}

Given $\gamma \in \Gamma_{\{0,x\}}^{\|x\|}$, a minimal contour surrounding $\{0,x\}$, with $\gamma \subset E_N$ (we can suppose $N$ large enough such that $E_N$ contains $\gamma$), we have that $\gamma$ contains all edges in the set
$$Q=\{\langle(-1,0),(0,0)\rangle, \langle(0,-1),(0,0)\rangle, \langle(x_1,x_2),(x_1,x_2+1)\rangle, \langle(x_1,x_2),(x_1+1,x_2)\rangle\}$$ and
$(\gamma \backslash Q)^{\ast}$ is the union of two disjoint paths in
the dual lattice, $c_1^{\ast}(\gamma)$, connecting $(-1/2,1/2)$ to
$(x_1-1/2,x_2+1/2)$ and $c_2^{\ast}(\gamma)$, connecting
$(1/2,-1/2)$ to $(x_1+1/2,x_2-1/2)$, both paths with cardinality
$x_1+x_2$. Denote by $\sigma_1(\gamma)$ the path in the original
lattice of minimal length connecting $0$ to $x$ obtained by
translating the path $c_1^{\ast}(\gamma)$ by $(1/2,-1/2)$, and
$\sigma_2(\gamma)$ the path of minimum size connecting $0$ to $x$
obtained by translating the path $c_2^{\ast}(\gamma)$ by
$(-1/2,1/2)$. Consider the set $t_{\gamma}=\sigma_1(\gamma) \cup
\sigma_2(\gamma)$. Note that
\begin{eqnarray}\label{arv cont}
&&|t_{\gamma}^h|\leq 2x_1< |\gamma^v| \quad \text{and} \quad |t_{\gamma}^v|\leq 2x_2< |\gamma^h|.
\end{eqnarray}

By the definition of contour, the set $E_N \backslash \gamma$ is partitioned into two disjoint subsets, $E_{\gamma}$
being the edge interior of the contour $\gamma$ and $E_N \backslash (\gamma \cup E_{\gamma})$ called the set of
exterior edges of the contour $\gamma$, where $G_{\gamma}=(I_{\gamma},E_{\gamma})$ is connected.

Denote by $C_{\gamma}$ the set of configurations of closed edges in $E_N$ such that $C_{\gamma}=\{C\subset E_N; C \supset \gamma\mbox{ and }C \cap t_{\gamma}=\emptyset$ (that is, the edges of $t_{\gamma}$ are open), clearly $C_{\gamma}\neq \emptyset$.
Observe that $C_{\tilde{\gamma}}\cap C_{\gamma}=\emptyset$ for all $\tilde{\gamma},\gamma\in \Gamma_{\{0,x\}}^{\|x\|}, \tilde{\gamma}\neq\gamma$. Because, if $\tilde{\gamma}\neq \gamma$, then either $t_{\tilde{\gamma}}\cap \gamma \neq \emptyset$, or
$t_{\gamma}\cap \tilde{\gamma} \neq \emptyset$. As in a configuration of $C_{\gamma}$ the edges in $\gamma$
are closed and the edges in $t_{\gamma}$ are open, we have $C_{\tilde{\gamma}}\cap C_{\gamma}=\emptyset$.

Consider the disjoint union $\mathcal{C}=\displaystyle\bigcup_{\gamma \in \Gamma_{\{0,x\}}^{\|x\|}}\mathcal{C}_{\gamma}$ where each $\mathcal{C}_{\gamma}\neq \emptyset$.

We have

\begin{eqnarray}\label{inf0}
&&\tau_{\textbf{p}}^{f,N}(0,x) \geq \prob(\mathcal{C})=\prob\left(\displaystyle\bigcup_{\gamma \in \Gamma_{\{0,x\}}^{\|x\|}}\mathcal{C}_{\gamma}\right)=\displaystyle\sum_{\gamma \in \Gamma_{\{0,x\}}^{\|x\|}}
\prob(\mathcal{C}_{\gamma})= \nonumber \\
&&=\beta_x(1-p_h)^{|\gamma^h|}(1-p_v)^{|\gamma^v|}p_h^{|t_{\gamma}^h|}p_v^{|t_{\gamma}^v|}=\beta_x\frac{(1-p_h)^{|\gamma^h|}}{p_h^{|\gamma^h|}}
\frac{(1-p_v)^{|\gamma^v|}}{p_v^{|\gamma^v|}}p_h^{|\gamma^h|+|t_{\gamma}^h|}p_v^{|\gamma^v|+|t_{\gamma}^v|}>\nonumber \\
&&>\beta_x\lambda_h^{|\gamma^h|}\lambda_v^{|\gamma^v|}p_h^{|\gamma^h|+|\gamma^v|}p_v^{|\gamma^h|+|\gamma^v|}=
\beta_x\lambda_h^{|\gamma^h|}\lambda_v^{|\gamma^v|}p_h^{|\gamma|}p_v^{|\gamma|}>\beta_x\lambda_h^{|\gamma^h|}\lambda_v^{|\gamma^v|}p_h^{2|\gamma|}= \nonumber \\
&&=\beta_x\lambda_h^{2(x_2+1)}\lambda_v^{2(x_1+1)}p_h^{2\|x\|}.
\end{eqnarray}
In the first inequality we used \eqref{arv cont}, in the second inequality we used that $p_v>p_h$, and in the last equality we used \eqref{tam cont}.

Note that the above bound is uniform in $N$, so we can write
\begin{eqnarray}\label{ci}
&&\tau_{\textbf{p}}^{f}(0,x) \geq \beta_x\lambda_h^{2(x_2+1)}\lambda_v^{2(x_1+1)}p_h^{2\|x\|}.
\end{eqnarray}

We can summarize the results of this section in the following
proposition:

\begin{proposition}Let $x=(x_1,x_2)$ and  $x'=(x_2,x_1)\in\mathbb{Z}^2$. Then
\begin{equation}\label{lower}
\tau_{\textbf{p}}^{f}(0,x) \geq \beta_x\lambda_h^{2(x_2+1)}\lambda_v^{2(x_1+1)}p_h^{2\|x\|}
\end{equation} and
\begin{equation}\label{upper}\tau_{\textbf{p}}^{f}(0,x')<\lambda_h^{2(x_1+1)}\lambda_v^{2(x_2+1)}\left(\frac{(4^3
\lambda_h)^{\frac{\|x\|}{2}+1}}{1-4^3\lambda_h}
+\beta_x(1+12\lambda_h)^{\|x\|} \right).
\end{equation}
\end{proposition}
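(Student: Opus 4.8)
The plan is to prove the two displayed inequalities separately, as they are one-sided estimates of independent origin; together they are precisely the conclusions (\ref{ci}) and (\ref{cs}). For the lower bound (\ref{lower}) I would restrict the truncated connectivity to an explicit family of favorable configurations indexed by minimal contours. To each $\gamma\in\Gamma_{\{0,x\}}^{\|x\|}$ I associate the event $\mathcal{C}_\gamma$ on which every edge of $\gamma$ is closed while the companion path $t_\gamma=\sigma_1(\gamma)\cup\sigma_2(\gamma)$ joining $0$ to $x$ inside $I_\gamma$ is open; on $\mathcal{C}_\gamma$ the cluster of $0$ is trapped by the closed circuit $\gamma$ and contains $x$, hence is finite. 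The key observation is that the $\mathcal{C}_\gamma$ are pairwise disjoint, since for $\tilde\gamma\neq\gamma$ either $t_{\tilde\gamma}$ meets $\gamma$ or $t_\gamma$ meets $\tilde\gamma$, making the open/closed prescriptions incompatible. Summing $\prob(\mathcal{C}_\gamma)$ over the $\beta_x$ minimal contours, factoring each weight into its closed-edge and open-path parts, and using (\ref{arv cont}), $p_v>p_h$ and (\ref{tam cont}) yields (\ref{lower}).

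For the upper bound (\ref{upper}) I would argue from the contour representation (\ref{carac}): any closed-edge set $C$ producing a finite cluster containing $\{0,x'\}$ must contain some $\gamma\in\Gamma_{\{0,x'\}}$, so the connectivity is dominated by $\sum_{\gamma\in\Gamma_{\{0,x'\}}}\lambda_h^{|\gamma^h|}\lambda_v^{|\gamma^v|}$, the residual sum over $C\setminus\gamma$ being bounded by the partition function $Z_N$. I would then factor out the minimal weight $\lambda_h^{2(x_1+1)}\lambda_v^{2(x_2+1)}$ through (\ref{tam cont}), convert surplus vertical factors into horizontal ones via $\lambda_v\le\lambda_h$, and group contours by cardinality $n=\|x\|+m$. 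The tail $n>\tfrac32\|x\|$ is handled by the crude bound $|\Gamma_{\{0,x\}}^n|\le 4^n$ and a geometric sum (valid once $\lambda_h<4^{-3}$), while for $m\le\tfrac12\|x\|$ Lemma \ref{contagem} gives $|\Gamma_{\{0,x\}}^{\|x\|+m}|\le 12^m\beta_x\binom{\|x\|}{m}$, and the binomial theorem collects these into the factor $(1+12\lambda_h)^{\|x\|}$.

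The principal obstacles are verifying the pairwise disjointness of the events $\mathcal{C}_\gamma$, which rests on the precise placement of $t_\gamma$ relative to the contour, and ensuring that every estimate is uniform in $N$ so that both bounds pass to the limit $N\to\infty$ and hold for $\tau^f_{\textbf{p}}$ itself. The combinatorial input of Lemma \ref{contagem} is what makes the small-contour regime tractable: replacing it by the naive $4^n$ count would swamp the leading $\beta_x$ term and prevent the eventual comparison with the lower bound needed for Theorem \ref{resultado2}.
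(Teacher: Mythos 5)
Your proposal is correct and follows essentially the same route as the paper: the lower bound via the pairwise disjoint events $\mathcal{C}_\gamma$ (closed minimal contour plus open companion path $t_\gamma$) combined with \eqref{arv cont}, $p_v>p_h$ and \eqref{tam cont}, and the upper bound via the contour representation \eqref{carac}, factoring out the minimal weight, splitting the sum at $n=\tfrac{3}{2}\|x\|$, using the crude $4^n$ count for the tail and Lemma \ref{contagem} with the binomial theorem for the rest. Both estimates are uniform in $N$, exactly as you note, so they pass to the limit $\tau^f_{\textbf{p}}$.
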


\section{Proof of Theorem \ref{resultado2}}

Consider $x=(x_1,x_2) \in \mathbb{Z}_{+}^2$, $x'=(x_2,x_1)$,  $\rho=\frac{x_2}{x_1}>1$ and $\eta=\frac{\lambda_v}{\lambda_h}$. Comparing \eqref{lower} and \eqref{upper} we see that $\tau^{f}_{\textbf{p}}(0,x)>\tau^{f}_{\textbf{p}}(0,x')$ if

$$\lambda_h^{2(x_1+1)}\lambda_v^{2(x_2+1)}\left(\frac{(4^3 \lambda_h)^{\frac{\|x\|}{2}+1}}{1-4^3\lambda_h} +\beta_x(1+12\lambda_h)^{\|x\|} \right)<\beta_x\lambda_h^{2(x_2+1)}\lambda_v^{2(x_1+1)}p_h^{2\|x\|},$$
which is equivalent to
$$\eta<\left(\frac{\beta_x p_h^{2\|x\|}}{\frac{(4^3 \lambda_h)^{\frac{\|x\|}{2}+1}}{1-4^3\lambda_h} +\beta_x(1+12\lambda_h)^{\|x\|}}\right)^{\frac{1}{2(x_2-x_1)}}.$$

Then, $\tau^{f}_{\textbf{p}}(0,x)>\tau^{f}_{\textbf{p}}(0,x')$ holds for all
$\eta$ which satisfy $$\eta < \tilde{\eta}(p_h,x_1,x_2),$$
where $\tilde{\eta}(p_h,x_1,x_2)$ is defined for all $\lambda_h<4^{-3}$, i.e. $p_h>(1+4^{-3})^{-1}$,
and for all $x_2>x_1>0$ and it is given explicitly by
\begin{eqnarray}\label{f eta}
&&\tilde{\eta}(p_h,x_1,x_2)=\left(\frac{\beta_x p_h^{4(x_1+x_2+2)}}{\frac{(4^3 \lambda_h)^{x_1+x_2+3}}{1-4^3\lambda_h} +\beta_x(1+12\lambda_h)^{2(x_1+x_2+2)}}\right)^{\frac{1}{2(x_2-x_1)}}.
\end{eqnarray}
Note now that
\begin{eqnarray}
&&\displaystyle\lim_{p_h\rightarrow 1^{-}}\tilde{\eta}(p_h,x_1,x_2)=\displaystyle\lim_{p_h\rightarrow 1^{-}}
\left(\frac{\beta_x^{\frac{1}{x_1}} p_h^{4(1+\rho+\frac{2}{x_1})}}{\frac{(4^3 \lambda_h)^{1+\rho+\frac{3}{x_1}}}{(1-4^3\lambda_h)^{\frac{1}{x_1}}} +\beta_x^{\frac{1}{x_1}}(1+12\lambda_h)^{2(1+\rho+\frac{2}{x_1})}}\right)^{\frac{1}{2(\rho-1)}}=1 \nonumber
\end{eqnarray}
since $\lambda_h\rightarrow 0^{+}$ when $p_h\rightarrow 1^{-}$.

Note also that, for a fixed slope $\rho>1$ and using Lemma $\ref{lim beta}$ we have that
\begin{eqnarray}
&&\displaystyle\lim_{|x|\rightarrow \infty}\tilde{\eta}(p_h,x_1,x_2)=\displaystyle\lim_{x_1\rightarrow \infty}
\left(\frac{\beta_x^{\frac{1}{x_1}} p_h^{4(1+\rho+\frac{2}{x_1})}}{\frac{(4^3 \lambda_h)^{1+\rho+\frac{3}{x_1}}}{(1-4^3\lambda_h)^{\frac{1}{x_1}}} +\beta_x^{\frac{1}{x_1}}
(1+12\lambda_h)^{2(1+\rho+\frac{2}{x_1})}}\right)^{\frac{1}{2(\rho-1)}}=f(p_h,\rho), \nonumber
\end{eqnarray}
where $$f(p_h,\rho)=\left(\frac{\alpha p_h^{4(1+\rho)}}{(4^3
\lambda_h)^{1+\rho}
+\alpha(1+12\lambda_h)^{2(1+\rho)}}\right)^{\frac{1}{2(\rho-1)}},$$
and $\displaystyle\lim_{p_h\rightarrow 1^{-}}f(p_h,\rho)=1$.

This concludes the proof.

\section {Appendix: Proof of the Inequality \ref{assertion}}

\begin{proposition}For any $k=-m+1,...,-1,0$
\begin{eqnarray}
|\Gamma^{\|x\|+m}_{\{0,x\}}(e_k^{\ast})|\leq 3^m \cdot \binom{\|x\|+m}{m}\beta_{x}.
\end{eqnarray}
\end{proposition}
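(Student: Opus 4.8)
The plan is to prove the stated inequality for each fixed $k$ by constructing an injection
$$\Gamma^{\|x\|+m}_{\{0,x\}}(e_k^{\ast}) \hookrightarrow \Gamma^{\|x\|}_{\{0,x\}} \times \mathcal{P}_m \times \{1,2,3\}^m,$$
where $\mathcal{P}_m$ denotes the collection of $m$-element subsets of $\{1,\dots,\|x\|+m\}$. The three factors on the right-hand side have cardinalities $\beta_x$, $\binom{\|x\|+m}{m}$ and $3^m$ respectively, so producing such an injection immediately yields $|\Gamma^{\|x\|+m}_{\{0,x\}}(e_k^{\ast})|\le 3^m\binom{\|x\|+m}{m}\beta_x$. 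Fixing $k$ (equivalently, fixing the leftmost edge $e_k^{\ast}$ of $\gamma^{\ast}$ crossing the $x$-axis) is what provides a canonical basepoint and orientation for the traversal below, so that the whole decoding is well defined.

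Concretely, given $\gamma\in\Gamma^{\|x\|+m}_{\{0,x\}}(e_k^{\ast})$, I would first traverse the dual circuit $\gamma^{\ast}$ as a self-avoiding closed walk of $\|x\|+m$ steps, starting from $e_k^{\ast}$ in a fixed (say counterclockwise) orientation; this encodes $\gamma^{\ast}$ as a word over the four step directions. I would then apply a deterministic \emph{straightening} procedure that scans this word and deletes the $m$ steps by which $\gamma^{\ast}$ fails to be minimal, recording in $S\in\mathcal{P}_m$ the positions of the deleted steps and, for each deleted step, the local turn information needed to reinsert it. Because a minimal contour is exactly one whose dual circuit decomposes (after removing the four canonical edges of the set $Q$ from the lower-bound construction) into two monotone staircase paths, the "non-minimal" steps are precisely those creating a non-monotone excursion, and deleting them should terminate at a dual circuit $\gamma_0^{\ast}$ with exactly $\|x\|$ steps, i.e. at a minimal contour $\gamma_0\in\Gamma^{\|x\|}_{\{0,x\}}$.

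For injectivity I would check reversibility: from the triple $(\gamma_0,S,\vec d)$ one reinserts the $m$ marked steps in increasing order of position, and at each reinsertion the new step cannot be the reverse of the previous one (self-avoidance of $\gamma^{\ast}$), so at most three directions are available and the single symbol in $\{1,2,3\}$ suffices to pin the choice down. Hence the decoding $\gamma\mapsto(\gamma_0,S,\vec d)$ can be inverted, which is exactly injectivity.

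The main obstacle I expect is proving that the straightening map is well defined and lands in $\Gamma^{\|x\|}_{\{0,x\}}$: one must verify that each deletion preserves self-avoidance of the dual circuit and the property of surrounding $\{0,x\}$ (so that the output is still a genuine contour with $0,x$ in its interior), that the procedure removes \emph{exactly} $m$ steps --- which uses that $\|x\|$ given by \eqref{tam cont} is the minimal possible length --- and that it halts at a monotone (minimal) configuration. Making the notion of "defect step" precise and local enough that each carries only $\le 3$ reconstruction possibilities, while keeping the straightening canonical, is the delicate point; once this is established, summing the per-$k$ bound over the $m$ admissible values of $k$ recovers the factor $m$ in \eqref{conta F 1}, and the rest is routine bookkeeping.
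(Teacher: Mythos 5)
Your overall strategy is the same as the paper's: encode the dual circuit $\tilde{\gamma}^{\ast}$, starting from the fixed edge $e_k^{\ast}$, as a word of length $\|x\|+m$ over $\{R,L,U,D\}$, exhibit inside it a subword of length $\|x\|$ realizing the dual of a minimal contour, and then count by (positions of the $m$ extra letters) $\times$ (minimal contours) $\times$ ($\le 3$ choices per extra letter by self-avoidance). That is exactly the bound $3^m\binom{\|x\|+m}{m}\beta_x$. However, the step you flag as ``the main obstacle'' --- that the straightening map is well defined and lands in $\Gamma^{\|x\|}_{\{0,x\}}$ --- is not a technicality to be deferred: it is the entire content of the proof, and your sketch of how to do it does not work as stated. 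Deleting the ``defect'' steps one at a time from a closed circuit immediately destroys closure (removing a single lattice step leaves the two ends one unit apart); the excess letters must be removed in cancelling pairs (an extra $U$ with an extra $D$, an extra $R$ with an extra $L$), and even then a single deletion need not preserve self-avoidance or the property of surrounding $\{0,x\}$. You give no rule that identifies which steps are ``defects,'' no argument that exactly $m$ of them are removed, and no argument that the terminal object is a simple circuit with $0$ and $x$ in its interior.

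The paper avoids the iterative scheme entirely: it splits the circuit at a vertex $u_{i_0+1}$ farthest from $x$ in the region $\{a>x_1,\ b>x_2\}$, and in one shot keeps the first $x_2+1$ letters $U$ and the last $x_1+1$ letters $R$ of the first half, and the first $x_2+1$ letters $D$ and the last $x_1+1$ letters $L$ of the second half. The resulting word automatically has the correct letter counts (so closure and length $\|x\|$ are free), and the real work is a geometric argument showing the two resulting staircase paths from $u$ to $w$ meet only at their endpoints --- done by contradiction, locating a forced extra intersection of the original circuit with itself. Your injectivity discussion is fine (it mirrors the paper's ``with a lot of over-counting'' remark: one only needs the word map to be injective and then bounds the number of admissible words), but without a concrete, provably terminating extraction rule the proposal has a genuine gap precisely where the proof is hard.
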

\begin{proof}

We adapt for contours the argument of Theorem V.6.9 of \cite{S} used originally for paths. Given $\gamma \in \Gamma^{\|x\|}_{\{0,x\}}$, $\gamma^{\ast}$ is a circuit in the dual $\mathbb{L}^{\ast}$
with cardinality $\|x\|$ containing  the vertices $u=(-\frac{1}{2},-\frac{1}{2})$ and
$w=(x_1+\frac{1}{2},x_2+\frac{1}{2})$. Therefore $\gamma^{\ast}$ is the concatenation of two paths on the dual lattice,
$c_1^{\ast}$ and $c_2^{\ast}$, both of cardinality $\frac{\|x\|}{2}=x_1+x_2+2$, that do not intersect, $c_1^{\ast}$ being an oriented path from $u$ to $w$, and $c_2^{\ast}$ an oriented path from $w$ to $u$.

The path $c_1^{\ast}$ has exactly $x_1+1$ steps to the right and $x_2+1$ steps upwards. We can then identify $c_1^{\ast}$ as a word $W(c_1^{\ast})$ with $x_1+x_2+2$ letters, containing $x_1+1$ letters
$R$(right) and $x_2+1$ letters $U$(up).
Analogously we can identify $c_2^{\ast}$ as a word $W(c_2^{\ast})$ with $x_1+x_2+2$ letters, containing $x_1+1$ letters $L$(left) and $x_2+1$ letters $D$(down). Thereby, we identify the circuit $\gamma^{\ast}$ as a word $W(\gamma^{\ast})$ with $\|x\|$ letters, the concatenation of the words $W(c_1^{\ast})$ and $W(c_2^{\ast})$.

Analogously, any circuit $\tilde{\gamma} \in \Gamma^{\|x\|+m}_{\{0,x\}}(e_k^{\ast})$, can be identified with a word $W(\tilde{\gamma}^{\ast})$ of $\|x\|+m$ entries, using the alphabet  $\{R,L,U,D\}$. This word is obtained as follows: writing
$\tilde{\gamma}^{\ast}=u_1,f_1^{\ast},u_2,...,f_{\|x\|+m}^{\ast},u_{\|x\|+m+1}$ where $u_{\|x\|+m+1}=u_1=v_{-}^k$
and $f_1^{\ast}=e_k^{\ast}$, we identify $f_i^{\ast}=\{u_i,u_{i+1}\}$ with the letter $U,D,R$ or $L$ depending on $u_{i+1}-u_i$ being equal to $(0,1),\ (0,-1), (1,0)$ or $(-1,0)$, respectively.

We will show that the word $W(\tilde{\gamma}^{\ast})$ contains a
subword $W$ such that the dual circuit $\gamma^{\ast}$, defined by
the word $W$ from the vertex $u$, is the dual of a minimal contour
$\gamma$. As $\tilde{\gamma}\in\Gamma_{\{0,x\}}$, the dual contour
$\tilde{\gamma}^{\ast}$ has at least one vertex in the region
$\{(a,b)\in \mathbb{R}^2:a> x_1, b> x_2\}$. Then we can choose one
vertex $u_{i_0+1}$ with the property  $d(u_{i_0+1},x) \geq
d(z,x),\forall z\in \{u_1,u_2,...,u_{\|x\|+m+1}\}\cap\{(a,b)\in
\mathbb{R}^2:a> x_1, b> x_2\}$. Observe that the letters associated
to the edges $f_{i_0}^{\ast}=\{u_{i_0},u_{i_0+1}\}$ and
$f_{i_0+1}^{\ast}=\{u_{i_0+1},u_{i_0+2}\}$ are $R$ and $D$,
respectively.

Define the paths
$\lceil\tilde{\gamma}^{\ast}\rceil=u_1,f_1^{\ast},...,f_{i_0}^{\ast},u_{i_0+1}$
and
$\lfloor\tilde{\gamma}^{\ast}\rfloor=u_{i_0+1},f_{i_0+1}^{\ast},...,f_{\|x\|+m}^{\ast},u_{\|x\|+m+1}$.
By construction, the word $W(\lceil\tilde{\gamma}^{\ast}\rceil)$
contains at least $x_1+1$ letters $R$ and at least $x_2+1$ letters
$U$, and the word $\lfloor\tilde{\gamma}^{\ast}\rfloor$ contains at
least $x_1+1$ letters $L$ and at least $x_2+1$ letters $D$. Let
$\lceil W\rceil$ be the subword obtained from
$W(\lceil\tilde{\gamma}^{\ast}\rceil)$ by taking in
$W(\lceil\tilde{\gamma}^{\ast}\rceil)$ the first $x_2+1$ letters $U$
and the last $x_1+1$ letters $R$. Analogously let $\lfloor W\rfloor$
the subword obtained from $W(\lfloor\tilde{\gamma}^{\ast}\rfloor)$
by taking in $W(\lfloor\tilde{\gamma}^{\ast}\rfloor)$ the first
$x_2+1$ letters $D$ and the last $x_1+1$ letters $L$. Denote by $W$
the concatenation of words $\lceil W\rceil$ and $\lfloor W\rfloor$
and by $\gamma^{\ast}$ the realizations of the word $W$ from $u$.
Note that $|\gamma^{\ast}|=\|x\|$. We have that
$\gamma^{\ast}$ is the concatenation of paths $\lceil
\gamma^{\ast}\rceil$ and $\lfloor \gamma^{\ast}\rfloor$ where
$\lceil \gamma^{\ast}\rceil$ is the realization of the subword
$\lceil W\rceil$ from $u$ and $\lfloor \gamma^{\ast}\rfloor$ is the
realization of the subword $\lfloor W\rfloor$ from $w$. Consider
$\lfloor \gamma^{\ast}\rfloor^{-1}$ the inverse path of $\lfloor
\gamma^{\ast}\rfloor$ leaving $u$ and arriving $w$. The word
$W(\lfloor \gamma^{\ast}\rfloor^{-1})$ is obtained from $\lfloor W
\rfloor$ starting from the last letter of $\lfloor W \rfloor$ to the
first, and replacing $D$ by $U$ and $L$ by $R$. Clearly the first
letter of $\lceil W \rceil$ is $U$, the last letter of $\lceil W
\rceil$ is $R$ and the last letter of $W(\lfloor
\gamma^{\ast}\rfloor^{-1})$ is $U$. By construction of
$\gamma^{\ast}$, the first letter of $W(\lfloor
\gamma^{\ast}\rfloor^{-1})$ is $R$.
%
%

To show that $\gamma^{\ast}$ is a circuit in the dual having
$\{0,x\}$ in its interior, it remains to be seen that the paths
$\lceil \gamma^{\ast}\rceil$ and $\lfloor \gamma^{\ast}\rfloor^{-1}$
connecting $u$ to $w$ have no intersection, except for $u$ and $w$.
Then, suppose that there exists a vertex
$v=\left(r-\frac{1}{2},s-\frac{1}{2}\right)$ distinct from $u$ and
$w$ at the intersection of $\lceil \gamma^{\ast}\rceil$ and $\lfloor
\gamma^{\ast}\rfloor^{-1}$. We write $\lceil
\gamma^{\ast}\rceil(u,v)$ to denote the subpath of $\lceil
\gamma^{\ast}\rceil$ from $u$ to $v$, analogously for $\lfloor
\gamma^{\ast}\rfloor^{-1}(u,v)$. Thereby, the words $W(\lceil
\gamma^{\ast}\rceil(u,v))$ and $W(\lfloor
\gamma^{\ast}\rfloor^{-1}(u,v))$ contains $r$ letters $R$ and $s$
letters $U$. Consider the subpath of $\lceil
\tilde{\gamma}^{\ast}\rceil$ from $u_1$ to $u_{i_{1}}$, $\lceil
\tilde{\gamma}^{\ast}\rceil(u_1,u_{i_1})$, where $$i_1=\max\bigl\{i:
W(\lceil \tilde{\gamma}^{\ast}\rceil(u_1,u_i)) \mbox{ has exactly }
s \mbox{ letters } U \mbox{ and } W(\lceil
\tilde{\gamma}^{\ast}\rceil(u_i,u_{i_0+1})) \mbox{ has exactly }
x_1+1-r \mbox{ letters } R \bigr\}.$$

Observe that $i_1<i_0+1$ and in the construction of the path $\lceil
\gamma^{\ast}\rceil$ from the path $\lceil
\tilde{\gamma}^{\ast}\rceil$, the subpath $\lceil
\gamma^{\ast}\rceil(u,v)$ is obtained when we restrict the
construction to the subpath $\lceil
\tilde{\gamma}^{\ast}\rceil(u_1,u_{i_1})$.

With respect to the final vertex $u_{i_1}$ of the path $\lceil
\tilde{\gamma}^{\ast}\rceil(u_1,u_{i_1})$, we have that the second
coordinate of $u_{i_1}$ is at most $s$. Moreover, the first
coordinate must be at least $r$, because otherwise the path $\lceil
\tilde{\gamma}^{\ast}\rceil(u_{i_1},u_{i_0})$ would have more than $x_1+1-r$
edges associated to the letter $R$. But note that from the stage of
construction of $\lceil \gamma^{\ast}\rceil$ where a symbol $R$
appears in the word $\lceil W \rceil$, extracted from $W(\lceil
\tilde{\gamma}^{\ast}\rceil)$, the next letters $R$ of the word
$W(\lceil \tilde{\gamma}^{\ast}\rceil)$, from that first letter $R$
extracted, are all on the word $\lceil W \rceil$. Therefore, the
word $\lceil W \rceil$ would contain more than $x_1+1-r+r=x_1+1$
letters $R$, which is a contradiction. Then, the first coordinate of
$u_{i_1}$ must be at least $r$.

We conclude that the path $\lceil \tilde{\gamma}^{\ast}\rceil(u_1,u_{i_1})$ leaves the vertex $u_1$ through
the edge $f_1^{\ast}=e_k^{\ast}$, crossing the half-line $\{(a,0)\in \mathbb{R}^2:a\leq k\}$ of the original lattice, thereafter no longer intersects this half-line, always remaining in the region $\{(a,b)\in \mathbb{R}^2:b\leq s\}$
and  it crosses the half-line $\{(r,b)\in \mathbb{R}^2:b\leq s\}$ of the dual lattice.

Analogously, we find a vertex $u_{i_2}$ where $i_2>i_0+1$ such that the path
$\lfloor \tilde{\gamma}^{\ast}\rfloor^{-1}(u_1,u_{i_2})$ leaves the vertex $u_1$ through the edge
$f_{\|x\|+m}^{\ast}\neq e_k^{\ast}$, does not crosses the ray $\{(a,0)\in \mathbb{R}^2:a\leq k\}$ of the
original lattice, always remains in the region $\{(a,b)\in \mathbb{R}^2:a\leq r\}$ and crosses
the ray $\{(a,s)\in \mathbb{R}^2:a\leq r\}$ of the dual lattice.
%
%

Then there must be a point of intersection of the paths $\lceil \tilde{\gamma}^{\ast}\rceil$ and $\lfloor \tilde{\gamma}^{\ast}\rfloor^{-1}$ distinct from $u_1$ and $u_{i_0+1}$, which is a contradiction. Then, $\gamma^{\ast}$ is a circuit in the dual having $\{0,x\}$ in its interior.

Therefore, every $\tilde{\gamma} \in
\Gamma^{\|x\|+m}_{\{0,x\}}(e_k^{\ast})$ is associated with a word
$W(\tilde{\gamma}^{\ast})$ containing a subword $W$ such that the
circuit $\gamma^{\ast}$ given by the subword $W$ from $u$ is the
dual of a minimal contour $\gamma$. Since the map
$W:\Gamma^{\|x\|+m}_{\{0,x\}}(e_k^{\ast})\rightarrow\{L,R,U,D\}^{\|x\|+m}$
is injective, an upper bound for the number of such words may be
obtained as follows: choose $\|x\|$ among $\|x\|+m$ entries to
allocate the subword associated with a minimal contour, then we
have $\beta_x$ possibilities to such subwords and in the remaining
$m$ entries we have at most 3 possibilities, because the circuit is
self-avoiding. Therefore we have
\begin{eqnarray}
|\Gamma^{\|x\|+m}_{\{0,x\}}(e_k^{\ast})|\leq 3^m \cdot \binom{\|x\|+m}{m}\beta_{x} \nonumber
\end{eqnarray}
for $k=-m+1,...,-1,0$
\end{proof}

{\bf Acknowledgments.} B.N.B. de Lima and R. Sanchis are partially supported by Conselho Nacional de Desenvolvimento Cient\'\i fico e Tecnol\'ogico (CNPq) and Funda\c c\~ao de Amparo \`a Pesquisa do Estado de Minas Gerais (Programa Pesquisador Mineiro). We thank the authors of the referee reports.

\end{document}